\tikzset{partition/.style={fill,circle,inner sep=1pt}}
\tikzset{partition/.style={fill,circle,inner sep=1pt},
         part/.style={baseline=0,scale=0.5,bend left=45},
         partlabel/.style={below}}
\tikzstyle{pnt}=[draw,ellipse,fill,inner sep=1pt]
\tikzstyle{opnt}=[ ]
\tikzstyle{pntt}=[draw,ellipse,fill,inner sep=0.5pt]
\tikzstyle{point}=[draw,ellipse,fill,inner sep=2pt]
  \newtheorem{theorem}{Theorem}[section]
    \newtheorem{lemma}[theorem]{Lemma}
    \newtheorem{definition}[theorem]{Definition} 
    \newtheorem{example}[theorem]{Example} 
    \newtheorem{remark}[theorem]{Remark}
\newtheorem*{theorem*}{Theorem 4.1}
\newcommand{\Ass}{\operatorname{Ass}}
\subjclass[2000]{05C25, 05E40, 05C38, 13F20}
\begin{document}
\title[Associated Primes of Certain Graph Ideals]{Asymptotic Growth of Associated Primes of Certain Graph Ideals}
\author{Sarah Wolff}
\date{\today}
\begin{abstract} We specify a class of graphs, $H_t$, and characterize the irreducible decompositions of all powers of the cover ideals. This gives insight into the structure and stabilization of the corresponding associated primes; specifically, providing an answer to the question ``For each integer $t\geq 0$, does there exist a (hyper) graph $H_t$ such that stabilization of associated primes occurs at $n\geq (\chi(H_t)-1)+t$?" \cite{colorings}. For each $t$, $H_t$ has chromatic number $3$ and associated primes that stabilize at $n=2+t$. 
\end{abstract}
\maketitle

\section{Introduction}\label{intro}
We work at the intersection of graph theory and commutative algebra by relating powers of square-free monomial ideals in polynomial rings to vertex covers of finite simple graphs. Ideal constructions give two correspondences, both of which associate to each such graph a monomial ideal. These correspondences have been established and studied in, for example, \cite{villarreal}, \cite{simis}, \cite{oddholes}, and \cite{colorings}.

Motivated by the Strong Perfect Graph Theorem, Francisco, H\`{a}, and Van Tuyl \cite{oddholes} expand the link between graph theory and algebra in order to provide an algorithm for determining if a graph is \textit{perfect}--- neither it nor its complementary graph has an induced odd cycle of length five or greater. Their algorithm results from relating odd cycles to the associated primes of powers of monomial ideals. Francisco,  H\`{a}, and Van Tuyl continue to investigate this connection in \cite{colorings}, establishing a lower bound on the stabilization of these associated primes that is derived from the chromatic number of the corresponding graphs. They find that stabilization does not occur when $n< \chi(G)-1$, for $\chi(G)$ the chromatic number of the graph and $n$ the power of the ideal \cite[Corollary 4.9]{colorings}. They further show that this bound is not optimal, leading to the question: ``For each integer $t\geq 0$, does there exist a (hyper)graph $H_t$ such that the stabilization of associated primes occurs at $ n\geq (\chi(H_t)-1)+t $?" \cite[Question 4.10]{colorings}. In this paper we answer this question in the affirmative. 

More precisely, let $G$ be a graph with vertex set $V_G=\{v_1,\dots,v_m\}$ and edge set $E_G$ consisting of unordered pairs of distinct vertices of $G$. Let $K$ be a field. We identify the vertices of $G$ with the variables in the polynomial ring $R_G=K[v_1,\dots,v_m]$. Further, we associate two square-free monomial ideals to $G$: the \textit{edge ideal}, $I_G$, generated by $\{v_iv_j\;\vert\;\{v_i,v_j\}\in E_G\}$, and the \textit{cover ideal}, $J_G$, generated by $\{v_{i_1}\cdots v_{i_j}\;\vert\; \{v_{i_1},\dots,v_{i_j}\}\;\text{a vertex cover of}\; G\}$. These ideal constructions were first introduced in \cite{villarreal} and give the correspondences mentioned above.

Francisco,  H\`{a}, and Van Tuyl study the associated primes of $R_G/(J_G)^2$, denoted $\Ass(R_G/(J_G)^2)$, and find that $P$ is in $\Ass(R_G/(J_G)^2)$ if and only if either $P=(v_i,v_j)$ where $\{v_i,v_j\}$ is an edge of $G$, or $P=(v_{i_1},\dots,v_{i_s})$ where the subgraph induced by $\{v_{i_1},\dots,v_{i_s}\}$ is an odd cycle of $G$ \cite[Corollary 3.4]{oddholes}. These results are derived by determining the irreducible decomposition of $(J_G)^2$, i.e., writing $(J_G)^2$ as an intersection of irreducible monomial ideals.

They then extend these ideas in \cite{colorings} by studying $\Ass(R_G/(J_G)^n)$ for $n\geq 2$. Brodmann \cite{Brodmann} in 1979 proved that the set of associated primes of powers of ideals \textit{stabilizes}, i.e., for a ring $R$ and an ideal $I$ of $R$, there exists some positive integer $s$ such that $\Ass(R/I^s)=\Ass(R/I^n)$ for all $n\geq s$. Francisco, H\`{a}, and Van Tuyl \cite{colorings} find a lower bound on the stabilization of $\Ass(R_G/(J_G)^n)$, proving for a graph $G$ that stabilization does not occur when $n<\chi (G)-1$ \cite[Corollary 4.9]{colorings}. They then pose the question mentioned above \cite[Question 4.10]{colorings}, which we answer in the affirmative by providing a family of graphs, $H_t$, each with chromatic number three such that for each $t$, $\Ass(R_{H_t}/(J_{H_t})^n)$ stabilizes at $n=(\chi(H_t)-1)+t=2+t$.

The main result of this paper is:
\begin{theorem*}
For $t\in \mathbb{Z}_+$, let $R_{t}:=R_{H_t}$ be the polynomial ring associated to $H_t$ and let $J_t:=J_{H_t}$ denote the cover ideal of $H_t$. Then the associated primes of $R_t/(J_t)^n$ stabilize at $n=(\chi(H_t)-1)+t=2+t$. 
\end{theorem*}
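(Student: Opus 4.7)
\medskip

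\noindent\textbf{Proof proposal.} The plan is to prove the theorem by explicitly computing, for every $n\geq 1$, the irreducible decomposition of $(J_t)^n$ and reading off $\Ass(R_t/(J_t)^n)$ from it. This is the same strategy used by Francisco, H\`a, and Van Tuyl in \cite{oddholes, colorings} at the level of $(J_G)^2$; the work here is to push the calculation to arbitrary $n$ in the specific family $H_t$. The theorem then splits into two inclusions: (i) a \emph{non-stabilization} statement, that some prime $P^\star \in \Ass(R_t/(J_t)^{2+t})$ does not lie in $\Ass(R_t/(J_t)^n)$ for any $n<2+t$; and (ii) a \emph{stabilization} statement, that $\Ass(R_t/(J_t)^n)=\Ass(R_t/(J_t)^{2+t})$ for all $n\geq 2+t$.

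For (i) my first step would be to single out a distinguished subset $W\subseteq V_{H_t}$, determined by the structure of $H_t$, and set $P^\star=(v_i : v_i\in W)$. Using the standard correspondence between irreducible primary components of monomial ideals and saturated colon ideals, proving $P^\star\in \Ass(R_t/(J_t)^{2+t})$ reduces to exhibiting a monomial $m\in R_t$ with $(J_t)^{2+t}:m$ equal (up to radical adjustments) to $P^\star$. The natural candidate for $m$ is a product of minimal vertex covers of $H_t$ chosen so that "using $P^\star$ to cover once more" is forced exactly at the $(2+t)$-th power; the parameter $t$ should measure the "defect" by which $H_t$ fails to be covered by $2+t-1$ of its minimal covers meeting $W$ in a controlled way. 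A parallel argument, using that $H_t$ has chromatic number $3$, would show that no such witness exists for $n\leq 1+t$, so that $P^\star\notin \Ass(R_t/(J_t)^n)$ for $n<2+t$.

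For (ii) I would first show that $\Ass(R_t/(J_t)^n)$ is a non-decreasing chain for this family, which is automatic once the irreducible components stabilize. Then, given an associated prime $Q$ arising at some power $n>2+t$, I would write $Q=P_W$ for a vertex set $W$ and use a pigeonhole/replacement argument on the exponents in an irredundant decomposition to realize $Q$ already at level $2+t$: any monomial witnessing $Q$ at level $n$ can be truncated by discarding "redundant" minimal-cover factors to produce a witness at level $2+t$. Combined with the monotone containment $\Ass(R_t/(J_t)^{2+t})\subseteq \Ass(R_t/(J_t)^n)$, this gives equality.

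The main obstacle, I expect, is the combinatorial bookkeeping in step (i): pinning down $W$ and the witness monomial $m$ so that the colon $(J_t)^{2+t}:m$ is genuinely $P^\star$ \emph{and} no analogous witness exists at power $1+t$. This requires a sharp understanding of which subsets of $V_{H_t}$ fail to be covered by $k$ minimal vertex covers, for each $k$, and is the place where the explicit construction of $H_t$ must be used in an essential way; everything else (the stabilization in (ii), and the passage from irreducible decompositions to associated primes) is standard once the decomposition is in hand.
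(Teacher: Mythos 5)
Your overall strategy --- compute the irreducible decomposition of $(J_t)^n$ for all $n$ and read the associated primes off of it --- is indeed the paper's strategy, but your proposal stops short of the content that makes it work, and the one concrete mechanism you offer for the hard half would fail. The distinguished prime is the maximal ideal $P^\star=(v_1,\dots,v_m)$: the whole graph $H_t$ is what the paper calls a $t$-cluster (an induced odd cycle $V$ together with $r$ of the $y$-vertices whose neighborhoods lie in $V$), and the decomposition of Theorem \ref{decomposition} contains a component supported on an $r$-cluster precisely when $n\geq r+2$. The reason such components cannot occur earlier is \emph{not} the chromatic number: $\chi(H_t)=3$ only obstructs stabilization for $n<2$, and the entire point of the family is that stabilization happens far beyond the chromatic bound, so your proposed ``parallel argument using that $H_t$ has chromatic number $3$'' cannot show that no witness exists for $n\leq 1+t$. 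What actually excludes $P^\star$ from $\Ass(R_t/(J_t)^n)$ for $2\le n<2+t$ is a degree count specific to clusters: the $\hat n$-admissible degree vectors of Definition \ref{defnhat}, built from the minimum covers of $\{y_i\}\cup N(y_i)$ (Lemma \ref{coveringsubgraphs}), satisfy the strict inequality of Lemma \ref{degreearg} exactly because $n>r+1$, and this is why $D_{t,n}^r$ appears in the decomposition only for $r\le n-2$. Your plan explicitly defers this (``pinning down $W$ and the witness monomial,'' ``which subsets fail to be covered by $k$ minimal vertex covers'') to later bookkeeping, so the lower-bound half is a genuine gap, and the chromatic-number heuristic cannot fill it.

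For the stabilization half, your monotonicity claim is circular as stated (you assume the components stabilize in order to conclude the chain of associated primes is non-decreasing), and the truncation/pigeonhole step is only asserted. In the paper this half is immediate once Theorem \ref{decomposition} is in hand: for $n>2$ the supports of the components are edges, induced odd cycles, and $r$-clusters with $1\le r\le n-2$, and since $H_t$ has only $t$ $y$-vertices there is no $r$-cluster with $r>t$; hence no new support, and so no new associated prime, can appear for $n>2+t$. A colon-ideal witness argument of the kind you sketch could in principle replace reading supports off an irredundant decomposition, but it would still have to be powered by the same cluster and degree-sum analysis, which is precisely the ingredient your proposal leaves unsupplied.
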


As in \cite{oddholes}, we characterize the associated primes of $R_t/(J_{t})^n$ by studying the monomial ideal powers $(J_{t})^n$. Just as the cover ideal is generated by vertex covers of the graph, we find that the generators of the ideals in the irreducible decompositions of $(J_{t})^n$ are closely related to vertex covers. In Section \ref{background} we develop the necessary tools from algebra and graph theory for our results. In Section \ref{familyofgraphs} we introduce the family of graphs and explore the structure of these graphs with several key lemmas. In Section \ref{mainsection} we use these lemmas to characterize the irreducible decompositions of $(J_{t})^n$ (Theorem \ref{decomposition}). Theorem \ref{maintheorem} is then a direct consequence of Theorem \ref{decomposition}. 

\section{Graph Theory and Associated Primes}\label{background}
In this section we briefly introduce the important definitions from graph theory and algebra needed for our results. We use the basic conventions, notation, and definitions from  \cite{bollobas}, \cite{oddholes}, and \cite{miller}.

For a graph $G$ we enumerate its vertex set $V_G=\{v_1,\dots,v_m\}$. We assume that $G$ is a simple graph so that its edge set, $E_G$, consists of unordered pairs
$\{v_i,v_j\}$ of distinct vertices. Two vertices are \textit{adjacent} if they are joined by an edge.

A \textit{coloring} of $G$ is an assignment of colors to the vertices of $G$ such that no two adjacent vertices are assigned the same color. The \textit{chromatic number} of $G$ is the minimal number of colors in a coloring of $G$.

A subset $A$ of $V_G$ is a \textit{vertex cover} if every edge of $G$ is incident to at least one vertex of $A$. The subset is a \textit{minimum vertex cover} if it is a vertex cover of smallest cardinality. In this paper we consider minimum vertex covers rather than minimal vertex covers as this distinction is necessary for the proof of Theorem \ref{decomposition}.

For a subset $A$ of $V_G$, the \textit{neighbors} of $A$, denoted $N(A)$, are the vertices of $G$ that are adjacent to vertices of $A$, but do not lie in $A$. The subgraph of $G$ \textit{induced} by $A$ is the subgraph with vertex set $A$ and edge set consisting of those edges of $G$ with both vertices in $A$. 

A graph with vertex set $\{w_1,\dots,w_k\}$ is a \textit{$k$-cycle} if its edge set consists precisely of edges connecting $w_k$ to $w_1$ and $w_i$ to $w_{i+1}$, $1\leq i< k $. We call the cycle \textit{odd} (respectively, \textit{even}), if $k$ is odd (respectively, even).

Let $K$ be a field. We identify the vertices of $G$ with variables in the polynomial ring $R_G=K[v_1,\dots,v_m]$. Let $I_G$ denote the edge ideal of $G$, and let $J_G$ denote the cover ideal of $G$, generated by $\{v_{i_1}\cdots v_{i_j}\;\vert\; \{v_{i_1},\dots,v_{i_j}\}\;\text{a vertex cover of}\; G\}$. We note that $J_G$ is often equivalently defined as the Alexander dual of $I_G$ (see, e.g., \cite{oddholes}, \cite{colorings}). For an explanation of Alexander duality, see \cite{miller}. 

A typical monomial element of the cover ideal $J_G$ is of the form $v_1^{a_1}v_2^{a_2}\cdots v_m^{a_m}$ for nonnegative integers $a_i$. Just as the variables $v_1,\dots,v_m$ correspond to vertices of the graph $G$, the vector $(a_1,\dots,a_m)$ also has a graph theoretic interpretation. We refer to any such $\mathbf{a}=(a_1,\dots,a_m)$ (corresponding to the vertices $v_1,\dots, v_m$ of $G$) as a \textit{degree vector} and say a nonzero degree vector  $\mathbf{a}$ is a \textit{k-cover} if $a_i+a_j\geq k$ whenever $\{v_i,v_j\}\in E_G$. 

The concept of $k$-cover was first introduced in \cite{hibi} as a generalization of a vertex cover.  Note that if a subset $A$ of $V_G$ is a minimum vertex cover then a degree vector $\mathbf{a}$ with $a_i=1$ if $v_i\in A$ and $a_i= 0$ otherwise is a one-cover of $G$. We call such a one-cover a \textit{minimum one-cover}.

 



 An \textit{irreducible} monomial ideal is an ideal of the form $V^\mathbf{a}=(v_1^{a_1},\dots,v_m^{a_m})$ for $\mathbf{a}\in \mathbb{N}^n$, where if $a_i=0$, we omit $v_i^{a_i}$ from the set of generators. Every monomial ideal $I$ can be written as the finite intersection of irreducible monomial ideals: $I=V^{a_1}\cap \cdots\cap V^{a_s}$, called an \textit{irreducible decomposition} of $I$ \cite{miller}. The decomposition is called \textit{irredundant} if no $V^{a_i}$ can be omitted. 
 
Let $R$ be a commutative ring and $M$ an $R$-module. The set of \textit{associated primes} of $M$, denoted $\Ass(M)$, consists of prime ideals $P$ such that $P$ annihilates some $m$ in $M$. 

For a graph $G$, the cover ideal $J_G$ has irreducible decomposition $J_G=\bigcap_{\{v_i,v_j\}\in E_G} (v_i,v_j)$ (see e.g., \cite{oddholes}); hence, $\Ass(R/J_G)=\{(v_i,v_j)\;\vert\; \{v_i,v_j\}\in E_G\}$.  Generalizing to  $(J_G)^n$, we compute the irreducible decompositions of $(J_G)^n$ for our family of graphs in order to deduce the set of associated primes of $R/(J_G)^n$. We refer to this set as the \textit{associated primes of $G$}.

\section{A Family of Graphs}\label{familyofgraphs}

We proceed to define a family of graphs, $H_t$, to give an affirmative answer to the question posed in \cite[Question 4.10]{colorings}: ``For each integer $t\geq 0$, does there exist a (hyper)graph $H_t$ such that the stabilization of associated primes occurs at $ n\geq (\chi(H_t)-1)+t $?" Note by \cite[Remark 3.3, Corollary 3.4]{oddholes} that the associated primes of an odd cycle, $G$, stabilize at $n=\chi(G)-1+0=2$. We thus answer the above question for $t\geq 1$.

\begin{definition}\label{graphs} The graph $H_1$ has vertex set $\{x_1,\dots,x_5,y_1\}$ and edge set such that the subgraph induced by $\{x_1,\dots,x_5\}$ is a $5$-cycle and the neighbors of $y_1$ are precisely $\{x_1,x_2,x_3\}$.

For $t>1$, $H_t$ has vertex set $\{x_1,\dots,x_{4t-1},y_1,\dots,y_t\}$ and edge set such that the subgraph induced by $\{x_1,\dots,x_{4t-1}\}$ is a $(4t-1)$-cycle, the neighbors of $y_1$ are precisely $\{x_1,x_2,x_3\}$, and for $1<i\leq t$ the neighbors of $y_i$ are precisely $\{x_{4i-4},x_{4i-3},x_{4i-2},x_{4i-1}\}$  (Figure \ref{fig:1}).
\end{definition}
We first note that the graphs $H_t$ have chromatic number three for all $t\geq 1$. To see this, assign color $1$ to the vertices $\{x_1,x_3\}\cup\{y_i\;\vert\; i>1\}$, color $2$ to the vertices $\{x_2\}\cup\{x_{2k+1}\;\vert\; k>1\}$, and color $3$ to the remaining vertices. Each $H_t$ has at least one odd cycle; thus, this $3$-coloring is a minimum coloring. 
\begin{figure}[H]

\begin{tikzpicture}[scale=0.6]
\foreach \i in {0,...,4}
	{
		\node[pnt] at (0,\i)(\i){};
	}
	\foreach \i / \j in {0/5,1/4,2/3, 3/2, 4/1}
	{
		\draw (\i) node[left] {\footnotesize $x_{\j}$};
	}
\node[pnt] at (1, 3)(y1){};
\draw (y1) node[right] {\footnotesize $y_1$};
\draw(0)  to [bend left=45] (4);
\draw(4)  to (3);
\draw(3)  to (2);
\draw(2)  to (1);
\draw(1)  to (0);
\draw(4)  to (y1);
\draw(3)  to (y1);
\draw(2)  to (y1);

\begin{scope}[shift={(4,-2)}]
\foreach \i in {0,...,6}
	{
		\node[pnt] at (0,\i)(\i){};
	}
	\foreach \i / \j in {0/7,1/6,2/5, 3/4, 4/3, 5/2, 6/1}
	{
		\draw (\i) node[left] {\footnotesize $x_{\j}$};
	}
\node[pnt] at (1, 1.5)(y2){};
\draw (y2) node[right] {\footnotesize $y_2$};

\node[pnt] at (1, 5)(y1){};
\draw (y1) node[right] {\footnotesize $y_1$};

\draw(0)  to [bend left=35] (6);
\draw(6)  to (5);
\draw(5)  to (4);
\draw(4)  to (3);
\draw(3)  to (2);
\draw(2)  to (1);
\draw(1)  to (0);
\draw(6)  to (y1);
\draw(5)  to (y1);
\draw(4)  to (y1);
\draw(3)  to (y2);
\draw(2)  to (y2);
\draw(1)  to (y2);
\draw(0)  to (y2);
\end{scope}

\begin{scope}[shift={(8,-6)}]
\foreach \i in {0,...,10}
	{
		\node[pnt] at (0,\i)(\i){};
	}
	\foreach \i / \j in {0/11,1/10,2/9, 3/8, 4/7, 5/6, 6/5, 7/4, 8/3, 9/2, 10/1}
	{
		\draw (\i) node[left] {\footnotesize $x_{\j}$};
	}
\node[pnt] at (1, 1.5)(y3){};
\draw (y3) node[right] {\footnotesize $y_3$};

\node[pnt] at (1, 5.5)(y2){};
\draw (y2) node[right] {\footnotesize $y_2$};

\node[pnt] at (1, 9)(y1){};
\draw (y1) node[right] {\footnotesize $y_1$};

\draw(0)  to [bend left=25] (10);
\draw(10)  to (9);
\draw(9)  to (8);
\draw(8)  to (7);
\draw(7)  to (6);
\draw(6)  to (5);
\draw(5)  to (4);
\draw(4)  to (3);
\draw(3)  to (2);
\draw(2)  to (1);
\draw(1)  to (0);
\draw(10)  to (y1);
\draw(9)  to (y1);
\draw(8)  to (y1);
\draw(7)  to (y2);
\draw(6)  to (y2);
\draw(5)  to (y2);
\draw(4)  to (y2);
\draw(3)  to (y3);
\draw(2)  to (y3);
\draw(1)  to (y3);
\draw(0)  to (y3);
\end{scope}

%
%
%
%

\draw (0,5) node {\large $H_1$};
\draw (4,5) node {\large $H_2$};
\draw (8,5) node {\large $H_3$};
\draw (12,0) node {...};
\end{tikzpicture}
\caption{$H_t$}
\label{fig:1}
\end{figure}

Let $J_t:=J_{H_t}$ be the cover ideal of $H_t$ for each $t\geq 1$ and let $R_t:=R_{H_t}$ be the polynomial ring associated to $H_t$. We show that $\Ass(R_t/(J_t)^n)$ stabilizes at $n=2+t$. This result will follow from a characterization of the irreducible decompositions of $(J_t)^n$ for all $t\geq 1$ and $n\geq 1$. For this characterization, we use the structure of the graphs together with the following technical lemmas, all of which follow easily from the definitions:

\begin{lemma}\label{MinA} Let $G$ be a graph with vertex set $\{v_1,\dots,v_m\}$. Let $R_G$ be the polynomial ring associated to $G$, and let $J_G$ be the cover ideal of $G$. If $M$ is a minimal generator of $(J_G)^n$, then 
$$M\in \bigcap_{\{v_i,v_j\}\in E_G} (v_i^n,v_j)\cap (v_i^{n-1},v_j^2)\cap\cdots\cap (v_i^2,v_j^{n-1})\cap (v_i,v_j^n).$$
\end{lemma}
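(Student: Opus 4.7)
The plan is to reduce the statement to the single inequality $a_i+a_j\geq n$ for every edge $\{v_i,v_j\}$, where $\mathbf{a}=(a_1,\ldots,a_m)$ is the exponent vector of $M$. For a fixed edge $\{v_i,v_j\}$ and fixed $k\in\{1,\ldots,n\}$, membership $M\in(v_i^{n-k+1},v_j^k)$ is equivalent to $a_i\geq n-k+1$ or $a_j\geq k$. A quick case analysis shows that this holds for every such $k$ if and only if $a_i+a_j\geq n$: if the sum is at least $n$ and $a_j<k$, then $a_i\geq n-a_j>n-k$, giving $a_i\geq n-k+1$; conversely, if $a_i+a_j<n$, then the choice $k:=a_j+1\in\{1,\ldots,n\}$ violates both inequalities at once.

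With that reduction in place, I would invoke the standard fact that every minimal monomial generator of the monomial ideal $(J_G)^n$ arises as a product $M=C_1C_2\cdots C_n$, where each $C_\ell$ is a minimal monomial generator of $J_G$, i.e., a squarefree monomial whose support is a vertex cover of $G$. Indeed, $(J_G)^n$ is generated as a monomial ideal by exactly such products, so every monomial of $(J_G)^n$ is divisible by one of them, and minimality of $M$ forces equality. Then, fixing any edge $\{v_i,v_j\}$, the vertex-cover property of each $C_\ell$ gives $\deg_{v_i}(C_\ell)+\deg_{v_j}(C_\ell)\geq 1$, and summing from $\ell=1$ to $n$ yields $a_i+a_j\geq n$, as required.

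The only mildly delicate step is the factorization of $M$ as a product of $n$ vertex-cover monomials, but this is a routine feature of powers of monomial ideals rather than a genuine obstacle; the remainder of the argument is a direct unpacking of the definitions of vertex cover and cover ideal.
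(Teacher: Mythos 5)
Your proof is correct; the paper offers no written proof of this lemma (it is listed among the technical lemmas said to ``follow easily from the definitions''), and your argument is precisely the intended one. Reducing membership in $\bigcap_{k}(v_i^{n-k+1},v_j^k)$ to the inequality $a_i+a_j\geq n$ and then obtaining that inequality by writing a minimal generator of $(J_G)^n$ as a product of $n$ vertex-cover monomials is the natural route, and it is consistent with the paper's Lemma~\ref{ncovers}, of which your factorization step is essentially the easy half.
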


\begin{lemma}\label{ncovers} 
Let $G$ be a graph with vertex set $\{v_1,\dots,v_m\}$ and cover ideal $J_G$. Then the mononomial $v_1^{a_1}\cdots v_m^{a_m}$ is an element of $(J_G)^n$ if and only if $\mathbf{a}=(a_1,\dots,a_m)$ is an $n$-cover of $G$ that can be written as the sum of $n$ one-covers, $\mathbf{a_i}$, of $G$: $\mathbf{a}=\mathbf{a_1}+\cdots+\mathbf{a_n}$. 
\end{lemma}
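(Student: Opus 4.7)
The plan is to prove both implications directly, using only the standard fact that a monomial $v^{\mathbf{a}}$ lies in a monomial ideal $I$ if and only if $v^{\mathbf{a}}$ is divisible by some monomial generator of $I$. Since the generators of $J_G$ are exactly the squarefree monomials $\prod_{v_i\in A} v_i$ for $A$ a vertex cover of $G$, the generators of $(J_G)^n$ are products of $n$ such squarefree monomials.

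For the forward direction, suppose $v_1^{a_1}\cdots v_m^{a_m}\in (J_G)^n$. Then $\mathbf{a}\geq \mathbf{b}_1+\cdots+\mathbf{b}_n$ componentwise, where each $\mathbf{b}_i\in\{0,1\}^m$ is the indicator vector of a vertex cover, hence a one-cover. Let $\mathbf{c}=\mathbf{a}-(\mathbf{b}_1+\cdots+\mathbf{b}_n)\in\mathbb{N}^m$. Here I would use the small observation that one-covers are closed under adding a nonnegative vector (since $a_i+a_j\geq 1$ is only strengthened), so I can distribute $\mathbf{c}$ arbitrarily among the $\mathbf{b}_i$, say by setting $\mathbf{a}_1:=\mathbf{b}_1+\mathbf{c}$ and $\mathbf{a}_i:=\mathbf{b}_i$ for $i>1$. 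Then $\mathbf{a}=\mathbf{a}_1+\cdots+\mathbf{a}_n$ is an honest sum of $n$ one-covers, and summing the inequalities $(\mathbf{a}_i)_j+(\mathbf{a}_i)_k\geq 1$ over each edge $\{v_j,v_k\}\in E_G$ shows $\mathbf{a}$ is an $n$-cover.

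For the reverse direction, suppose $\mathbf{a}=\mathbf{a}_1+\cdots+\mathbf{a}_n$ with each $\mathbf{a}_i$ a one-cover. Let $S_i:=\{v_j:(\mathbf{a}_i)_j>0\}$ be the support of $\mathbf{a}_i$. The one-cover condition $(\mathbf{a}_i)_j+(\mathbf{a}_i)_k\geq 1$ on each edge $\{v_j,v_k\}$ forces at least one of $v_j,v_k$ into $S_i$, so $S_i$ is a vertex cover of $G$. Hence $v^{\mathbf{a}_i}$ is divisible by the generator $\prod_{v_j\in S_i} v_j$ of $J_G$, giving $v^{\mathbf{a}_i}\in J_G$ and therefore $v^{\mathbf{a}}=\prod_i v^{\mathbf{a}_i}\in (J_G)^n$.

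There is no real obstacle here; the lemma is essentially a translation between monomial divisibility and additive decomposition of exponent vectors. The only subtlety worth flagging is that the statement's $n$-cover condition is in fact automatically implied by the decomposition into $n$ one-covers, so the content of the lemma really lies in the equivalence between membership in $(J_G)^n$ and the decomposability of $\mathbf{a}$; the redistribution step in the forward direction is what converts a product-of-generators-with-leftover into a clean sum of exactly $n$ one-covers.
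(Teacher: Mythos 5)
Your proof is correct. The paper offers no argument for this lemma at all---it is listed among the ``technical lemmas, all of which follow easily from the definitions''---and your argument is precisely the standard translation between monomial membership and exponent-vector decomposition that is intended: the only point requiring any care, namely converting divisibility by a product of $n$ vertex-cover indicators into an exact sum of $n$ one-covers, is handled correctly by absorbing the leftover vector $\mathbf{c}$ into one of the indicators (which remains a one-cover since the inequalities $a_i+a_j\geq 1$ are only strengthened), and your observation that the $n$-cover condition is automatic from the decomposition is also accurate.
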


We note that a degree vector, $\mathbf{a}$, for a graph $G$ can also be interpreted as a degree vector for a subgraph of $G$ induced by $A\subseteq V_G$ by considering only the $a_i$ corresponding to the vertices of $A$. We denote this subgraph degree vector by $\mathbf{a}\vert_A$.

\begin{lemma}\label{inducedsubgraphs} For a graph $G$, let $\mathbf{a}$ be a degree vector for $G$. Let $G_1,\dots,G_l$ be induced subgraphs of $G$ such that $E_G\subseteq E_{G_1}\cup\cdots\cup E_{G_l}$. Then $\mathbf{a}$ is a one-cover of $G$ iff $\mathbf{a}\vert_{G_i}$ is a one-cover of $G_i$ for all $i$, $1\leq i \leq l$.
\end{lemma}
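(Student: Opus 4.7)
The plan is to prove both directions of this biconditional by straightforward unwinding of the definition of a one-cover, using the edge-covering hypothesis only in the backward direction.

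For the forward direction, I would assume $\mathbf{a}$ is a one-cover of $G$ and fix any $i$ with $1\le i\le l$. Given any edge $\{v_p,v_q\}\in E_{G_i}$, the fact that $G_i$ is an induced subgraph of $G$ (indeed, any subgraph suffices here) gives $\{v_p,v_q\}\in E_G$, so by hypothesis $a_p+a_q\ge 1$. Since the entries of $\mathbf{a}\vert_{G_i}$ corresponding to $v_p$ and $v_q$ are precisely $a_p$ and $a_q$, this says $\mathbf{a}\vert_{G_i}$ is a one-cover of $G_i$.

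For the backward direction, assume $\mathbf{a}\vert_{G_i}$ is a one-cover of $G_i$ for each $i$. Pick an arbitrary edge $\{v_p,v_q\}\in E_G$. By the covering hypothesis $E_G\subseteq E_{G_1}\cup\cdots\cup E_{G_l}$, there is some index $i$ with $\{v_p,v_q\}\in E_{G_i}$. Applying the one-cover property of $\mathbf{a}\vert_{G_i}$ at this edge yields $a_p+a_q\ge 1$. Since the edge was arbitrary, $\mathbf{a}$ is a one-cover of $G$.

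There is essentially no obstacle here: the lemma is a direct consequence of the definition of a one-cover together with the two inclusions $E_{G_i}\subseteq E_G$ (from $G_i$ being a subgraph) and $E_G\subseteq\bigcup_i E_{G_i}$ (by hypothesis). The only point worth stressing when writing the proof is that the ``induced'' hypothesis on the $G_i$ is not actually used, but it is consistent with the intended application in the paper, where the subgraphs $G_i$ will naturally be induced subgraphs.
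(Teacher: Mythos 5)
Your proof is correct; the paper gives no proof of this lemma at all (it is listed among the ``technical lemmas, all of which follow easily from the definitions''), and your argument is exactly the intended one: unwind the definition of a one-cover, using $E_{G_i}\subseteq E_G$ in the forward direction and $E_G\subseteq E_{G_1}\cup\cdots\cup E_{G_l}$ in the backward direction. Your side remark that the ``induced'' hypothesis is not actually needed is also accurate.
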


Our interest is in studying $(J_t)^n$. By Lemma \ref{ncovers}, elements of $(J_t)^n$ correspond to $n$-covers of $H_t$. These $n$-covers can be written as the sum of $n$ one-covers of $H_t$, which in turn arise from one-covers of induced subgraphs, by Lemma \ref{inducedsubgraphs}. Since minimum vertex covers give rise to one-covers, it will be informative to look at the structure of minimum vertex covers of induced subgraphs of our family $H_t$.

\begin{lemma}\label{coveringcycles} A minimum vertex cover of an odd $k$-cycle consists of $\frac{k+1}{2}$ vertices.
\end{lemma}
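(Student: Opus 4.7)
The plan is to prove the bound in two directions: exhibit a vertex cover of the claimed size, and then show no smaller one exists.

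First I would construct an explicit vertex cover of size $\frac{k+1}{2}$ for an odd $k$-cycle on vertices $w_1,\dots,w_k$ with edges $\{w_i,w_{i+1}\}$ for $1\leq i<k$ together with $\{w_k,w_1\}$. Taking $A=\{w_1,w_3,w_5,\dots,w_k\}$ (the vertices with odd index) gives $\frac{k+1}{2}$ vertices since $k$ is odd. A quick check verifies this is a vertex cover: for any edge $\{w_i,w_{i+1}\}$ with $1\leq i<k$, exactly one of $i$, $i+1$ is odd, so one endpoint lies in $A$; and the wraparound edge $\{w_k,w_1\}$ has both endpoints in $A$.

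For the lower bound I would use a simple double-counting argument. Suppose $A$ is any vertex cover of the cycle. Every vertex of a $k$-cycle has degree exactly $2$, so each $v\in A$ is incident to at most $2$ edges. Hence the number of (vertex, edge) incidences with the vertex in $A$ is at most $2|A|$. Since $A$ is a vertex cover, each of the $k$ edges contributes at least one such incidence, giving $2|A|\geq k$. Because $k$ is odd, $|A|\geq\lceil k/2\rceil=\frac{k+1}{2}$, matching the upper bound from the construction.

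There is no significant obstacle here; the only minor point is to use the oddness of $k$ at the final rounding step (an even cycle would only force $|A|\geq k/2$, which is tight). Combining the two directions yields that the minimum vertex cover of an odd $k$-cycle has cardinality exactly $\frac{k+1}{2}$.
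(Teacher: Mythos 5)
Your proof is correct: the explicit odd-indexed cover gives the upper bound, and the double-counting argument (each vertex covers at most two edges, so $2|A|\geq k$, forcing $|A|\geq \frac{k+1}{2}$ since $k$ is odd) gives the matching lower bound. The paper states this lemma without proof, remarking only that it follows easily from the definitions, and your argument is exactly the standard verification one would supply.
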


\begin{lemma}\label{coveringsubgraphs} For the graph $H_t$, a minimum vertex cover of the induced subgraph on $\{y_1\}\cup N(y_1)$ consists of the vertices $\{x_2,y_1\}$. For $i>1$, a minimum vertex cover of the induced subgraph on $\{y_i\}\cup N(y_i)$ consists of exactly $3$ vertices, one of which is $y_i$.
\end{lemma}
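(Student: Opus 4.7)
The plan is to enumerate the edges of each induced subgraph directly from Definition \ref{graphs} and handle the two cases by a short combinatorial argument. A useful preliminary observation is that the cycle $x_1,\dots,x_{4t-1}$ has length at least $5$ for every $t\geq 1$, so within each relevant neighborhood we only see the \emph{consecutive} cycle edges among the $x_i$'s; no chord appears.

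For the $y_1$ case, the induced subgraph on $\{y_1,x_1,x_2,x_3\}$ has exactly five edges: the three spokes from $y_1$ to $x_1,x_2,x_3$ together with the two consecutive cycle edges $\{x_1,x_2\}$ and $\{x_2,x_3\}$. I would first verify that $\{x_2,y_1\}$ covers all five, then rule out single-vertex covers by noting that the maximum degree in this subgraph is $3$ (attained at $y_1$ and $x_2$). Uniqueness of the minimum cover follows from a two-case split on which endpoint of $\{y_1,x_2\}$ lies in a hypothetical size-two cover: if $y_1$ is chosen, the remaining edges $\{x_1,x_2\},\{x_2,x_3\}$ share only the vertex $x_2$; if $x_2$ is chosen, the remaining edges $\{y_1,x_1\},\{y_1,x_3\}$ share only $y_1$.

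For $i>1$, the induced subgraph on $\{y_i\}\cup N(y_i)$ is a four-vertex path $x_{4i-4}\!-\!x_{4i-3}\!-\!x_{4i-2}\!-\!x_{4i-1}$ with $y_i$ joined to all four path vertices. I would split on whether $y_i$ lies in a prospective cover. If $y_i$ is excluded, then all four $x$'s must be included to cover the four spokes, producing a cover of size $4$. If $y_i$ is included, the spokes are covered and what remains is the path, which contains the two disjoint edges $\{x_{4i-4},x_{4i-3}\}$ and $\{x_{4i-2},x_{4i-1}\}$, forcing at least two more vertices. Hence any vertex cover has size at least $3$, and any minimum cover has size exactly $3$ and contains $y_i$ (e.g.\ $\{y_i,x_{4i-3},x_{4i-1}\}$ realizes this bound).

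I do not expect any serious obstacle; the argument is bookkeeping once the neighborhoods are drawn. The only point requiring care is the up-front verification that no additional edge connects non-consecutive $x_i$'s within a neighborhood, which is exactly what the length hypothesis on the cycle gives us.
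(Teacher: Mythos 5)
Your proof is correct, and the paper itself offers no argument here (it states the technical lemmas of Section \ref{familyofgraphs} as following easily from the definitions), so your direct edge enumeration and case analysis is exactly the verification the paper leaves to the reader. The one point of care you flag---that the cycle length $4t-1\geq 5$ prevents chords or wrap-around edges inside each neighborhood---is indeed the only subtlety, and you handle it correctly.
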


\section{Stability of Associated Primes}\label{mainsection}

We restate our main theorem. Recall that $J_t$ is the cover ideal of $H_t$ and $R_t$ the polynomial ring associated to $H_t$.

\begin{theorem}\label{maintheorem}
Let $t\in \mathbb{Z}_+$. The associated primes of $R_t/(J_t)^n$ stabilize at $n=(\chi(H_t)-1)+t=2+t$. 
\end{theorem}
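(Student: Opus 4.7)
The plan is to derive Theorem~\ref{maintheorem} as a direct corollary of Theorem~\ref{decomposition}. Recall that for any monomial ideal $I \subseteq R$ with irredundant irreducible decomposition $I = V^{\mathbf{a}_1} \cap \cdots \cap V^{\mathbf{a}_s}$, the associated primes of $R/I$ are precisely the radicals $\sqrt{V^{\mathbf{a}_i}} = (v_j : (\mathbf{a}_i)_j > 0)$. Thus $\Ass(R/I)$ is in bijection with the supports of the degree vectors appearing in the decomposition.

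Applying this principle to $(J_t)^n$, the first step is to unpack Theorem~\ref{decomposition} and list, for each $n$, the collection of supports of the degree vectors indexing the irreducible components. Stabilization at $n = 2+t$ is then a two-part claim: the collection of supports at level $n = 2+t$ coincides with that at every higher level, and it strictly contains the collection at level $n = 1+t$. Both directions should follow by direct inspection from the explicit formula in Theorem~\ref{decomposition}: the irreducible components are parametrized, via Lemmas~\ref{ncovers} and~\ref{inducedsubgraphs}, by combinatorial data coming from one-covers of induced subgraphs of $H_t$, and this parametrization yields the same collection of supports for every $n \ge 2+t$ while producing strictly fewer supports at $n = 1+t$.

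The crux is to pinpoint the single support whose first appearance is at $n = 2+t$. I expect this to be the full vertex set $V_{H_t}$, making the maximal ideal $\mathfrak{m} = (x_1, \ldots, x_{4t-1}, y_1, \ldots, y_t)$ the last associated prime to appear. The upper bound---existence at $n = 2+t$---should follow from constructing an explicit minimal full-support degree vector via Lemma~\ref{MinA}. The main obstacle is the matching lower bound: showing no full-support irreducible component can exist for $n \le 1+t$. Here I would combine Lemma~\ref{ncovers}, which decomposes any element of $(J_t)^n$ as a sum of $n$ one-covers, with the structural constraints of Lemmas~\ref{coveringcycles} and~\ref{coveringsubgraphs}: the $(4t-1)$-cycle forces at least $2t$ of its vertices into any one-cover, while each $y_i$-gadget forces $y_i$ (or all of $N(y_i)$) into every one-cover. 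A pigeonhole argument across the $t+1$ ``features''---the cycle and the $t$ gadgets---should show that when $n \le 1+t$, any candidate full-support vector admits a coordinate that can be strictly decreased while the vector remains a decomposable $n$-cover, contradicting the minimality required by Lemma~\ref{MinA}. Correctly balancing the interaction between the cycle and the gadgets, especially accounting for shared vertices such as $x_1, x_2, x_3$ lying in both the cycle and the $y_1$-gadget, is where I anticipate the combinatorial heart of the argument.
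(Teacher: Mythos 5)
Your overall route is the paper's: read the associated primes off the irredundant irreducible decomposition of Theorem~\ref{decomposition} as the radicals (supports) of its components, note that these supports are exactly the edges, the induced odd cycles, and the vertex sets of $r$-clusters with $1\le r\le n-2$, and identify the maximal ideal --- arising from the unique $t$-cluster, which is $H_t$ itself since $N(\{y_1,\dots,y_t\})$ is all of the $x$-vertices --- as the last prime to appear. Where you diverge is in believing the two bounds need separate combinatorial arguments; they do not. Because Theorem~\ref{decomposition} is a \emph{complete} irredundant decomposition, the maximal ideal is associated exactly when a $D_{t,n}^t$ component occurs, i.e.\ exactly when $t\le n-2$: for $2<n<t+2$ no listed component has full support (an $r$-cluster with $r<t$ omits some $y$-vertex, and edges and odd cycles omit all of them), and for $n>t+2$ nothing new appears because $H_t$ has no $r$-cluster with $r>t$, so those $D_{t,n}^r$ are vacuous. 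In particular, the pigeonhole argument you flag as the ``combinatorial heart'' --- ruling out full-support components for $n\le 1+t$ by decreasing a coordinate of a candidate vector --- is unnecessary, and as written it is only a sketch; moreover Lemma~\ref{MinA} is not the right tool for the existence direction either (it places minimal generators of $(J_t)^n$ inside $A_{t,n}$; it neither constructs nor excludes irreducible components). If you simply invoke the parametrization already asserted in Theorem~\ref{decomposition}, your proposal collapses to the paper's proof.
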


This result will follow from the determination of the irreducible decompositions of $(J_t)^n$ in Theorem \ref{decomposition}. Each component of the irreducible decomposition of $(J_t)^n$ is an irreducible monomial ideal of the form $(v_{i_1}^{a_1},\dots,v_{i_k}^{a_k})$, for $v_{i_j}$ vertices of $H_t$ (either $x$ or $y$ vertices). The powers of the variables are in fact determined by the structure of the subgraph induced by $\{v_{i_1},\dots,v_{i_k}\} $. Thus, we abuse terminology in the following way: for an ideal $(v_{i_1}^{a_1},\dots,v_{i_k}^{a_k})\subseteq R_t$, we refer to each variable $v_{i_j}$ as a \textit{vertex}, each power $a_j$ as a \textit{degree}, and $\mathbf{a}=(a_1,\dots,a_k)$ as a \textit{degree vector}. 

For any simple graph $G$ with vertex set $\{v_1,\dots,v_m\}$, let $J_G$ be the cover ideal. Then by \cite[Theorem 3.2]{oddholes}, the irredundant irreducible decomposition of $(J_G)^2$ is:
$$(J_G)^2= \bigcap_{\{v_i,v_j\}\in E_G} [(v_i^2,v_j)\cap (v_i,v_j^2)]\bigcap\bigcap_{\substack{\{v_{i_1},\dots,v_{i_k}\}\\\text{an odd cycle}}} (v_{i_1}^2,\dots,v_{i_k}^2),$$ where the last intersection is over all subsets $\{v_{i_1},\dots,v_{i_k}\}$ such that the subgraph induced by $\{v_{i_1},\dots,v_{i_k}\}$ is an odd cycle.

For our family of graphs, we generalize the above irreducible decomposition of $(J_t)^2$ to one for $(J_t)^n$, $n>2$, by inductively adding a minimum one-cover's worth of degrees to each degree vector appearing in the irreducible decomposition of $(J_t)^{n-1}$. In other words, if $V^\mathbf{c}=(v_{i_1}^{c_1},\dots,v_{i_k}^{c_k})$ is an ideal in the irreducible decomposition of $(J_t)^{n-1}$, and $\mathbf{b}$ is a minimum one-cover of the subgraph induced by $\{v_{i_1},\dots,v_{i_k}\}$, we show that $V^{\mathbf{c}+\mathbf{b}}$ is an ideal in the irreducible decomposition of $(J_t)^n$.

To be more explicit, if the induced subgraph on $\{v_{i_1},v_{i_2},v_{i_3}\}\subseteq V_{H_t}$ is a $3$-cycle, then by \cite[Theorem 3.2]{oddholes}, $(v_{i_1}^2,v_{i_2}^2,v_{i_3}^2)$ is a component of the irreducible decomposition of $(J_t)^2$. We lift this to $(J_t)^n$ by showing that $(v_{i_1}^{a_1},v_{i_2}^{a_2},v_{i_3}^{a_3})$ is a component of the irreducible decomposition of $(J_t)^n$ whenever $\mathbf{a}=(a_1,a_2,a_3)$ is a degree vector for the induced subgraph on $\{v_{i_1},v_{i_2},v_{i_3}\}$ obtained by adding $(n-2)$ minimum one-covers of this subgraph to the degree vector $\mathbf{c}=(2,2,2)$. This results in many different possibilities for $\mathbf{a}$: by Lemma \ref{coveringcycles}, a minimum one-cover of the subgraph is of form $(0,1,1)$,$(1,0,1)$, or $(1,1,0)$. Thus, adding $(n-2)$ minimum one-covers to $\mathbf{c}$ leads to $n\choose 2$ possibilities for $\mathbf{a}$.

Rather than writing out all these possibilities, we introduce below the terminology \textit{n-admissible} and $\hat{n}$-\textit{admissible} to distinguish degree vectors of specific induced subgraphs of $H_t$. As the irreducible decompositions of $(J_t)^n$ are known for $n\leq 2$, we work with $n> 2$ in the following definitions.

Consider an arbitrary subset of vertices, $V=\{v_{i_1},\dots,v_{i_k}\}\subseteq V_{H_t}$. If the subgraph induced by $V$ is an odd cycle, we call $V$ an \textit{induced odd cycle}. Consider the corresponding degree vector $\mathbf{a}=(a_1,\dots,a_k)$. 
\begin{definition}\label{defnadmiss} A degree vector $\mathbf{a}$ corresponding to an induced odd cycle $V$ is n-admissible if $\mathbf{a}= (2,\dots,2)+\mathbf{b_1}+\cdots+\mathbf{b_{n-2}}$ where each $\mathbf{b_i}$ is a minimum one-cover of the cycle.
\end{definition}

\begin{remark}\label{nadmiss} Note by Lemma \ref{coveringcycles} that for $V$ an induced odd $k$-cycle, if the corresponding degree vector $\mathbf{a}$ is $n$-admissible, then $$\displaystyle \sum_{i=1}^k a_i = 2k+(n-2)\big(\frac{k+1}{2}\big).$$
\end{remark}

Now fix a positive integer $r\leq t$ and consider a subgraph induced by vertices $V\cup Y_r=\{v_{i_1},\dots,v_{i_k},y_{j_1},\dots,y_{j_r}\}$ 
where $V\subseteq V_{H_t}$  is an induced odd cycle, $Y_r\subseteq \{y_1,\dots,y_t\}$ (the specified $y$-vertices in Definition \ref{graphs}), and $N(Y_r)\subseteq V$. We call such a subgraph an \textit{r-cluster}, or an \textit{r-cluster induced by $V\cup Y_r$}, when we want to specify the vertices. Note that in the definition of $r$-cluster, $V$ can be an induced odd cycle of any length on any of the vertices of $H_t$, so long as $N(Y_r)\subseteq V$. On the other hand, $Y_r$ must contain exactly $r$ vertices, and they must be $y$-vertices. 

Recall that we assume $n>2$ for the following definition.
\begin{definition}\label{defnhat} A degree vector $\mathbf{c}$ corresponding to an $r$-cluster induced by $V\cup Y_r$, is $\hat{n}$-admissible if $\mathbf{c}=\mathbf{d}+\mathbf{e}+\mathbf{f_1}+\cdots+\mathbf{f_{n-3}}$, where:
\begin{itemize}
\item[(1)]  $d_i= 
    \left\{
     \begin{array}{cc}
       2\;  & v_{j_i}\in V\setminus N(Y_r)\\
       3\;  & \text{otherwise}
     \end{array}
   \right.
 $, 
\item[(2)] $\mathbf{e}\vert_{Y_r\cup N(Y_r)}=(0,\dots,0)$,
\item[(3)] $\mathbf{e}\vert_{V\setminus N(Y_r)}$ is a minimum one-cover of the subgraph induced by $V\setminus N(Y_r)$,
\item[(4)] for all $1\leq i\leq n-3$, $\mathbf{f_i}\vert_{V\setminus N(Y_r)}$ is a minimum one-cover of the subgraph induced by $V\setminus N(Y_r)$,
\item[(5)] for all $1\leq i\leq n-3$ and $y_{j_l}\in Y_r$, $\mathbf{f_i}\vert_{\{y_{j_l}\}\cup N(y_{j_l})}$ is a minimum one-cover of the subgraph induced by $\{y_{j_l}\}\cup N(y_{j_l})$.
\end{itemize}
\end{definition}
\begin{example}\label{ex} Let $n=5$. Consider the following $2$-cluster of $H_4$: the subgraph induced by $$V\cup Y_2= \{x_1, x_2, x_3, x_4,y_2,  x_7, x_8, y_3, x_{11}, x_{12}, x_{13},x_{14},x_{15}, y_1, y_4\}.$$ Then $\mathbf{d}=(3,3,3,2,2,2,2,2,2,3,3,3,3,3,3)$, one of the two possibilities for $\mathbf{e}$ is $(0,0,0,0,1,0,1,0,1,0,0,0,0,0,0)$ and two of the six possibilities for each $\mathbf{f_i}$, $i\in\{1,2\}$, are  $(0,1,0,1,0,1,0,1,0,1,0,1,0,1,1)$ and $(0,1,0,0,1,0,1,0,1,0,1,1,0,1,1)$. Thus, the degree vector $$\mathbf{c}=(3,5,3,3,4,3,4,3,4,4,4,5,3,5,5)$$ is $\hat{5}$-admissible.
\end{example}

\begin{remark} Consider the subgraph induced by $\{x_1,x_2,x_3,y_1\}$. By Lemma \ref{coveringsubgraphs}, the minimum vertex cover of this subgraph is $\{x_2, y_1\}$. Thus, for an arbitrary $r$-cluster, induced by $V\cup Y_r$, if $y_1\in Y_r$, the corresponding degrees for $x_2$ and $y_1$ would be $1$ in each $\mathbf{f_j}$. By the same lemma, for $i>1$, a minimum vertex cover of the subgraph induced by $\{y_i\}\cup N(y_i)$ consists of $y_i$ and two vertices in $N(y_i)$; thus, if $y_i\in Y_r$, then in each $\mathbf{f_j}$ the corresponding degrees for $y_i$ and two vertices in $N(y_i)$ would be $1$. 
\end{remark} 

The following lemma is key to the proof of Theorem \ref{decomposition}.

\begin{lemma}\label{degreearg}
Let $r\geq 1$ and $n> r+1$. Consider an arbitrary $r$-cluster of $H_t$, induced by $V\cup Y_r$. Let $\mathbf{c}=(c_1,\dots,c_{k+r})$ be a degree vector for the $r$-cluster. If $\mathbf{c}$ is $\hat{n}$-admissible then $\sum_{i=1}^{k+r}c_i < r+k+n\big(\frac{k+1}{2}+r\big)$.
\end{lemma}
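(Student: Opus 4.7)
The plan is to evaluate $\sum c_i$ in closed form directly from Definition \ref{defnhat}, rewrite the target inequality in an equivalent but more convenient form, and then close the argument using two pieces of structural data about $H_t$: the exact value of $m := |N(Y_r)|$, and a parity-sharpened bound on $\alpha$, the size of a minimum vertex cover of the subgraph induced by $A := V \setminus N(Y_r)$.

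First I would unpack $\hat{n}$-admissibility. The sets $A$ and the subgraphs $\{y_l\} \cup N(y_l)$ for $y_l \in Y_r$ are pairwise disjoint and together partition the $r$-cluster (because the $N(y_l)$ are pairwise disjoint and contained in $V$). Hence conditions (1)--(5), combined with Lemma \ref{coveringsubgraphs} (which gives minimum vertex cover sizes $2$ for $\{y_1\} \cup N(y_1)$ and $3$ for $\{y_l\} \cup N(y_l)$ when $l > 1$), yield
\[
\sum_{i=1}^{k+r} c_i = 2k + m + 3r + (n-2)\alpha + (n-3)(3r - r_1),
\]
where $r_1 = 1$ if $y_1 \in Y_r$ and $r_1 = 0$ otherwise. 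A short algebraic manipulation (clear the $\tfrac{1}{2}$ and collect the coefficient of $r$) shows that the target inequality is equivalent to
\[
(n-2)(2\alpha - k) + 2m + 2r(2n-7) - 2r_1(n-3) < n.
\]

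Next I would feed in the structural data. In $H_t$ the neighborhoods $N(y_l)$ are pairwise disjoint with $|N(y_1)| = 3$ and $|N(y_l)| = 4$ for $l > 1$, so $m = 4r - r_1$. Because $V$ is an odd cycle, deleting $m$ vertices leaves $A$ as a disjoint union of paths of lengths $\ell_1, \dots, \ell_p$ with $\sum_i \ell_i = k - m$, whence $\alpha = \sum_i \lfloor \ell_i/2 \rfloor$. When $r_1 = 0$, $m = 4r$ is even and $k$ is odd, so $k - m$ is odd; this forces at least one $\ell_i$ to be odd and gives the sharper bound $2\alpha \leq k - m - 1$. When $r_1 = 1$, the weaker $2\alpha \leq k - m$ suffices. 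Substituting these bounds together with $m = 4r - r_1$ into the reduced inequality, both sub-cases collapse to the same linear inequality $2r + 2 - n < n$, i.e.\ $r < n - 1$, which is precisely the hypothesis $n > r + 1$.

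The one genuinely delicate point is the parity refinement for $r_1 = 0$: without noting that the parities of $k$ and $m$ force an odd-length path component in $A$, the naive bound $2\alpha \leq k - m$ leaves no slack and the strict inequality fails to close. Everything else is careful algebraic bookkeeping, and it is pleasant that the two sub-cases on $r_1$ reduce to exactly the same final linear inequality in $r$ and $n$.
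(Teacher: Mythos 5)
Your proof is correct and follows essentially the same route as the paper's: you compute the contributions of $\mathbf{d}$, $\mathbf{e}$, and each $\mathbf{f_j}$ from Definition \ref{defnhat} and Lemma \ref{coveringsubgraphs}, bound the minimum cover of $V\setminus N(Y_r)$ by half its size with the parity sharpening exactly where the paper uses it (the case $y_1\notin Y_r$, where $k-4r$ is odd), and reduce the inequality to $r+1<n$. The only difference is cosmetic: you unify the two cases via the parameters $m$, $\alpha$, and $r_1$, whereas the paper writes out the two cases $y_1\in Y_r$ and $y_1\notin Y_r$ numerically.
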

\begin{proof} Consider an arbitrary $r$-cluster of $H_t$, induced by $V\cup Y_r$, and suppose the corresponding degree vector, $\mathbf{c}$, is $\hat{n}$-admissible. We compute the degree sum imposed by the definition of $\hat{n}$-admissible. This degree sum depends on whether $y_1\in Y_r$, or not. 

First suppose $y_1\in Y_r$. If $\mathbf{c}$ is $\hat{n}$-admissible, then $\mathbf{c}=\mathbf{d}+\mathbf{e}+\mathbf{f_1}+\cdots+\mathbf{f_{n-3}}$, with each degree vector as specified in Definition \ref{defnhat}. We first note that $$\sum_{i=1}^{k+r}d_i=12+15(r-1)+2(k-4r+1).$$
Next, we claim $$\sum_{i=1}^{k+r} e_i \leq \frac{k-4r+1}{2}.$$ To see this, we find a bound on the number of vertices in a minimum vertex cover of the subgraph induced by $V\setminus N(Y_r)$. First note that $k-4r+1$ vertices comprise $V\setminus N(Y_r)$. Further, the subgraph of $H_t$ induced by $V\setminus N(Y_r)$ consists of disjoint subgraphs of $V$ induced by one or more subsets $V_1,\dots, V_l\subset V$. Let $v_\alpha$ be the number of vertices in $V_\alpha$, $1\leq \alpha\leq l$. If $v_\alpha$ is even, a minimum vertex cover of the subgaph induced by $V_\alpha$ consists of $\frac{v_\alpha}{2}$ vertices, while if $v_\alpha$ is odd, a minimum vertex cover consists of $\frac{v_\alpha-1}{2}$ vertices. Hence, $$\sum_{i=1}^{k+r} e_i =  \sum_{v_\alpha \;\text{odd}} \frac{v_\alpha-1}{2}+\sum_{v_\alpha \;\text{even}} \frac{v_\alpha}{2}\leq \frac{1}{2}\sum_{\alpha=1}^l v_\alpha= \frac{k-4r+1}{2}, $$ as claimed. 

Finally, by Lemma \ref{coveringsubgraphs} and the above claim we see that for each $\mathbf{f_j}$, $1\leq j\leq n-3$,  $$\sum_{i=1}^{k+r} f_{j_i}\leq 2+3(r-1)+\frac{k-4r+1}{2}.$$ 
Since the degree sum of $\mathbf{f_j}$, $\sum_{i=1}^{k+r}f_{j_i}$, is independent of $j$, we have: $$\sum_{i=1}^{k+r} c_i = \sum_{i=1}^{k+r} d_i + \sum_{i=1}^{k+r} e_i +(n-3) \sum_{i=1}^{k+r} f_{j_i},$$ for any $j\in \{1,\dots,n-3\}$. The inequality of the lemma then reduces to showing that $r+1< n$, which is true by assumption.

Next suppose $y_1\notin Y_r$. Here, $$\sum_{i=1}^{k+r}d_i=15r+2(k-4r),$$ and we claim
$$\sum_{i=1}^{k+r} e_i \leq \frac{k-4r-1}{2}.$$ To see this, note that in this case $k-4r$ vertices comprise $V\setminus N(Y_r)$. As above, the subgraph of $H_t$ induced by $V\setminus N(Y_r)$ consists of disjoint subgraphs of $V$ induced by subsets $V_1,\dots,V_l\subset V$. Further, because $V$ is an induced odd cycle, $k-4r$ is odd, so at least one of the subsets, without loss $V_1$, has an odd number of vertices, $v_1$. Hence, as above,  $$\sum_{i=1}^{k+r} e_i =  \sum_{v_\alpha \;\text{odd}} \frac{v_\alpha-1}{2}+\sum_{v_\alpha \;\text{even}} \frac{v_\alpha}{2}\leq \frac{1}{2}\big( v_1-1+\sum_{\alpha=2}^l v_\alpha\big)= \frac{k-4r-1}{2}, $$ as claimed.

Finally, by Lemma \ref{coveringsubgraphs} and the above claim we see  that for each $\mathbf{f_j}$, $1\leq j\leq n-3$,  $$\sum_{i=1}^{k+r} f_{j_i}\leq 3r+\frac{k-4r-1}{2}.$$ Again, the degree sum of $\mathbf{f_j}$, $\sum_{i=1}^{k+r} f_{j_i}$, is independent of $j$, and the inequality of the lemma reduces to showing $r+1< n$.
\end{proof}

We now define several ideals to simplify the statement of Theorem \ref{decomposition}. Fix a graph $H_t$ and a positive integer $n$. For an arbitrary set of vertices $V=\{v_{i_1},\dots,v_{i_k}\}$ of $H_t$  with corresponding degree vector $\mathbf{a}=(a_1,\dots,a_k)$, let $V^\mathbf{a}$ denote the ideal $(v_{i_1}^{a_1},\dots,v_{i_k}^{a_k})$. Again, we remind the reader of our convention to omit $v_{i_j}^{a_j}$ when $a_j=0$.
\begin{definition} $$ A_{t,n}=\bigcap_{\{v_i,v_j\}\in E_{H_t}} (v_i^n,v_j)\cap (v_i^{n-1},v_j^2)\cap\cdots\cap (v_i^2,v_j^{n-1})\cap (v_i,v_j^n).$$
\end{definition}
\begin{definition} $\displaystyle B_{t,n}=\bigcap_{\substack{\text{subgraph of}\; H_t\; \text{induced}\\ \text{by}\; V\; \text{an odd cycle},\\ \mathbf{a}\; n\text{-admissible}}}V^\mathbf{a}.$
\end{definition}
\begin{definition} 
$\displaystyle D_{t,n}^r= \bigcap_{\substack{\text{subgraph of}\; H_t\; \text{induced}\\\text{by}\; V\cup Y_r \text{an}\; r\text{-cluster},\\ \mathbf{c}\; \hat{n}\text{-admissible}}}({V\cup Y_r})^\mathbf{c}.$
\end{definition}

We now give the irreducible decompositions of $(J_t)^n$:

\begin{theorem}\label{decomposition}
Let $t\in \mathbb{Z}_+$ and let $J_t$ be the cover ideal of $H_t$. For $n>2$, $(J_t)^n$ has the following irredundant irreducible decomposition:
$$(J_t)^n=A_{t,n}\cap B_{t,n}\cap\bigcap_{r=1}^{n-2} D_{t,n}^r$$
\end{theorem}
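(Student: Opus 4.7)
The plan is to induct on $n$, with base case $n=2$ supplied by \cite[Theorem 3.2]{oddholes}: when $n=2$ the range $1\leq r\leq n-2$ is empty, so the proposed decomposition collapses to $A_{t,2}\cap B_{t,2}$, matching loc.\ cit. For the inductive step from $n-1$ to $n$ I would establish the containment $(J_t)^n \subseteq$ RHS, the reverse containment, and irredundancy in turn.

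For $(J_t)^n \subseteq A_{t,n}\cap B_{t,n}\cap \bigcap_{r=1}^{n-2} D_{t,n}^r$: the $A_{t,n}$ factor is immediate from Lemma \ref{MinA}. For each remaining component $V^{\mathbf{a}}$ let $M=v_1^{c_1}\cdots v_m^{c_m}$ be a minimal generator of $(J_t)^n$. By Lemma \ref{ncovers} the vector $\mathbf{c}$ is a sum of $n$ one-covers of $H_t$, and Lemma \ref{inducedsubgraphs} restricts this to $n$ one-covers of the relevant induced subgraph, yielding $\sum c_i \geq n\mu$, where $\mu$ is the minimum one-cover sum of the subgraph (supplied by Lemma \ref{coveringcycles} in the odd-cycle case and by Lemma \ref{coveringsubgraphs} for $r$-clusters). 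If $c_i<a_i$ for every $i$ in the subgraph, then $\sum c_i\leq \sum a_i-|V|$ (resp., $\sum a_i-|V\cup Y_r|$), which, combined with the explicit value of $\sum a_i$ in Remark \ref{nadmiss} or the strict bound of Lemma \ref{degreearg}, gives a contradiction; hence some $c_i\geq a_i$ and $M\in V^{\mathbf{a}}$.

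For the reverse containment, the strategy is to peel off one one-cover at a time. Given $v^{\mathbf{c}}$ in the intersection at level $n$, I would produce a one-cover $\mathbf{b}$ of $H_t$ with $\mathbf{b}\leq\mathbf{c}$ componentwise such that $v^{\mathbf{c}-\mathbf{b}}$ lies in the analogous intersection at level $n-1$. The inductive hypothesis then places $v^{\mathbf{c}-\mathbf{b}}\in (J_t)^{n-1}$, whence $v^{\mathbf{c}}=v^{\mathbf{b}}\cdot v^{\mathbf{c}-\mathbf{b}}\in J_t\cdot(J_t)^{n-1}=(J_t)^n$. The construction of $\mathbf{b}$ is guided by the additive structure of Definitions \ref{defnadmiss} and \ref{defnhat}: every $n$-admissible vector is, by definition, a minimum one-cover of its cycle added to an $(n-1)$-admissible vector, and the $\hat{n}$-admissible vectors admit an analogous splitting into a single $\mathbf{f}$-layer plus $\hat{n-1}$-admissible data. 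Thus I would take $\mathbf{b}$ to be a minimum one-cover of $H_t$ assembled from a minimum cover of the $(4t-1)$-cycle together with the $y$-neighborhood choices prescribed by Lemma \ref{coveringsubgraphs}, and verify that its restriction to each component simultaneously peels off the top layer of that component's admissibility data.

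Irredundancy then follows by exhibiting, for each claimed component $V^{\mathbf{a}}$, a monomial in the intersection of all the others but not in $V^{\mathbf{a}}$; the strict inequality in Lemma \ref{degreearg} is the crucial ingredient, showing that no $D_{t,n}^r$ piece can be absorbed into $A_{t,n}\cap B_{t,n}$ or into a different $D_{t,n}^{r'}$, while Remark \ref{nadmiss} handles the $B_{t,n}$ pieces. The main obstacle I expect is the reverse containment, specifically the global compatibility of $\mathbf{b}$: a minimum one-cover chosen to correctly reduce one component may leave another component without enough slack to remain admissible at level $n-1$. Navigating this should require a careful case split on which components are tight at $\mathbf{c}$, and leverages the near-disjointness of the $y$-neighborhoods in Definition \ref{graphs}, which decouples the local choices made at different $y_i$.
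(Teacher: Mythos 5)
Your forward containment is essentially the paper's argument and is fine: a minimal generator of $(J_t)^n$ lies in $A_{t,n}$ by Lemma \ref{MinA}, and membership in each $V^{\mathbf{a}}$ is forced by the degree-sum contradiction coming from Lemmas \ref{ncovers}, \ref{coveringcycles}, \ref{coveringsubgraphs}, Remark \ref{nadmiss}, and Lemma \ref{degreearg}. The genuine gap is in your reverse containment, which is exactly where the content of the theorem sits. You propose to induct on $n$ and, given $v^{\mathbf{c}}$ in the level-$n$ intersection, to peel off a one-cover $\mathbf{b}\leq\mathbf{c}$ of $H_t$ with $v^{\mathbf{c}-\mathbf{b}}$ in the level-$(n-1)$ intersection. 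But membership in an irreducible component $W^{\mathbf{a}}$ is an existential condition (\emph{some} coordinate of $\mathbf{c}$ on $W$ meets its bound), and different components may have their slack at incompatible coordinates. Concretely: on every edge with $c_i+c_j=n$ you must have $b_i+b_j=1$ exactly; simultaneously, for every induced odd cycle $V$ and every $(n-1)$-admissible $\mathbf{u}'$ you must exhibit a coordinate with $c_j-b_j\geq u'_j$, knowing only that for each minimum one-cover $\mathbf{m}$ of $V$ the $n$-admissible vector $\mathbf{u}'+\mathbf{m}$ admits some uncontrolled, $\mathbf{m}$-dependent coordinate with $c_j\geq u'_j+m_j$; and likewise for every $r$-cluster and every $\widehat{n-1}$-admissible vector. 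You name this ``global compatibility'' obstacle yourself, but you give no construction of $\mathbf{b}$ and no argument that the case split closes; none of the cited lemmas supplies it. Since the existence of $n$-covers that do \emph{not} split off a one-cover is precisely the phenomenon driving the new associated primes, asserting that a compatible $\mathbf{b}$ exists is assuming the hard part.

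For comparison, the paper does not induct or peel. It takes a minimal generator $P$ of the intersection, notes $P\in A_{t,n}$ makes its exponent vector an $n$-cover, writes $H_t$ as a union of one-clusters, and uses Lemma \ref{inducedsubgraphs} to reduce to the induced odd cycles and the subgraphs $\{y_j\}\cup N(y_j)$ separately; there the additive form of $n$- and $\hat{n}$-admissible vectors, together with $(2,\dots,2)$ being a sum of two one-covers of an odd cycle and $(3,\dots,3)$ a sum of three one-covers of $\{y_j\}\cup N(y_j)$ (Lemma \ref{coveringsubgraphs}), yields the required decomposition into $n$ one-covers directly, with no need to match layers across components. Your irredundancy paragraph is likewise only a statement of intent, but the missing peeling construction is the decisive defect.
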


\begin{proof} For $t\in\mathbb{N}$ and $n>2$, let $L$ be the ideal $A_{t,n}\cap B_{t,n}\cap\bigcap_{r=1}^{n-2} D_{t,n}^r$. Let $M$ be a minimal generator of $(J_t)^n$. We show $M$ is in $L$ by showing it is in each of the ideals $A_{t,n}$, $B_{t,n}$, and $\bigcap_{r=1}^{n-2} D_{t,n}^r$ separately.

First suppose $M\notin B_{t,n}$. Since $M$ is a minimal generator of the monomial ideal $(J_t)^n$, $M= v_{1}^{a_1}\cdots v_{m}^{a_m}$, for $\{v_1,\dots,v_m\}$ the vertices of $H_t$. By Lemma \ref{ncovers} the powers, $\mathbf{a}=(a_1,\dots,a_m)$, when viewed as a degree vector of $H_t$ form an $n$-cover that can be written as the sum of $n$ one-covers, $\mathbf{b_i}$, of $H_t$: $\mathbf{a}=\mathbf{b_1}+\cdots+\mathbf{b_n}$. Let $V=\{v_{j_1},\dots,v_{j_k}\}$ be an arbitrary induced odd cycle of $H_t$. Since $\mathbf{a}$ is an $n$-cover of $H_t$, $\mathbf{a}\vert_V$ is an $n$-cover of the odd cycle. Further, for $1\leq i \leq n$, since $\mathbf{b_i}$ is a one-cover of $H_t$, $\mathbf{b_i}\vert_V$ is a one-cover of the cycle. Then $\mathbf{a}\vert_V=\mathbf{b_1}\vert_V+\cdots+\mathbf{b_n}\vert_V$ with each $\mathbf{b_i}\vert_V$ a one-cover of the subgraph induced by $V$. Thus, $\mathbf{a}\vert_V=(a_{j_1},\dots,a_{j_k})$ is an $n$-cover of the cycle that can be written as the sum of $n$ one-covers. Then by Lemma \ref{coveringcycles}, $\sum_{i=1}^k a_{j_i} \geq n\big(\frac{k+1}{2}\big)$. However, $M\notin B_{t,n}$, so $M\notin V^\mathbf{u}$ for some $n$-admissible $\mathbf{u}$. Thus, in $M$ each variable $v_{i_j}\in V$ has degree less than $u_j$, so $$\sum_{i=1}^k a_{j_i}\leq \sum_{i=1}^k (u_i-1)= k+(n-2)\big(\frac{k+1}{2}\big),$$ by Remark \ref{nadmiss}. But $k+(n-2)\big(\frac{k+1}{2}\big)< n\big(\frac{k+1}{2}\big)$, so $\mathbf{a}\vert_V$ is not an $n$-cover of the cycle that can be written as the sum of $n$ one-covers, a contradiction.  Thus, $M\in B_{t,n}$.

Now suppose $M\notin \bigcap_{r=1}^{n-2} D_{t,n}^r$. Again, $M=v_1^{a_1}\cdots v_m^{a_m}$ is a minimal generator of $(J_t)^n$ so the powers, $\mathbf{a}=(a_1,\dots,a_m)$, when viewed as a degree vector of $H_t$ form an $n$-cover that can be written as the sum of $n$ one-covers, $\mathbf{b_i}$, of $H_t$:  $\mathbf{a}=\mathbf{b_1}+\cdots+\mathbf{b_n}$. For $r\leq n-2$, consider an arbitrary $r$-cluster of $H_t$, induced by $V\cup Y_r$. As above, $\mathbf{a}\vert_{V\cup Y_r}$ is an $n$-cover of the $r$-cluster and for $1\leq i\leq n$, $\mathbf{b_i}\vert_{V\cup Y_r}$ is a one-cover of the $r$-cluster, so $\mathbf{a}\vert_{V\cup Y_r}=(a_{l_1},\dots,a_{l_{k+r}})$ is an $n$-cover of the $r$-cluster that can be written as the sum of $n$ one-covers. Then by Lemma \ref{coveringsubgraphs}, $\sum_{i=1}^{k+r} a_{l_i}\geq n\big(\frac{k+1}{2}+r\big)$. However, $M\notin\bigcap_{r=1}^{n-2}D_{t,n}^r$, so $M\notin (V\cup Y_r)^\mathbf{c}$, for some $\hat{n}$-admissible $\mathbf{c}$. Thus, as in the previous case, $$\sum_{i=1}^{k+r} a_{l_i}\leq \sum_{i=1}^{k+r} (c_i-1).$$ But since $n\geq r+2$, Lemma \ref{degreearg} shows  $$\sum_{i=1}^{k+r} (c_i-1)< n\big(\frac{k+1}{2}+r\big),$$ so $\mathbf{a}\vert_{V\cup Y_r}$ is not an $n$-cover of the $r$-cluster that can be written as the sum of $n$ one-covers, a contradiction. Thus, $M\in \bigcap_{r=1}^{n-2} D_{t,n}^r$. 

We finally note by Lemma \ref{MinA} that $M\in A_{t,n}$. Hence, $(J_t)^n\subseteq L$.

Now let $P$ be a minimal generator of $L$. The ideal $L$ is an intersection of monomial ideals, so $P=v_{i_1}^{a_1}\cdots v_{i_m}^{a_m}$. By definition, $P\in A_{t,n}$; therefore, the powers $\mathbf{a}=(a_1,\dots,a_m)$, of the variables of $P$ when viewed as a degree vector of $H_t$ form an $n$-cover. By Lemma \ref{ncovers}, it suffices to show $\mathbf{a}$ can be written as the sum of $n$ one-covers of $H_t$. 

Consider an arbitrary one-cluster of $H_t$, induced by $V\cup Y_1$. Note that $H_t$ can be written as a union of one-clusters. Thus, by Lemma \ref{inducedsubgraphs}, it suffices to show that $\mathbf{a}\vert_{V\cup Y_1}$ can be written as a sum of $n$ one-covers of the one-cluster. Applying Lemma \ref{inducedsubgraphs} again to the subgraph induced by $V\cup  Y_1=\{v_{i_1},\dots,v_{i_k},y_{j_1}\}$, it suffices to show $\mathbf{a}\vert_V$ and $\mathbf{a}\vert_{\{y_{j_1}\}\cup N(y_{j_1})}$ can be written as a sum of $n$ one-covers of the subgraphs induced by $V$ and $\{y_{j_1}\}\cup N(y_{j_1})$, respectively. 

Let $V$ be an induced odd $k$-cycle of $H_t$. Since $P\in B_{t,n}$, $P\in V^\mathbf{u}$ for some $n$-admissible $\mathbf{u}=(u_1,\dots,u_k)$. Thus, we need to show that $\mathbf{u}$ can be written as the sum of $n$ one-covers of $V$. By Definition \ref{defnadmiss}, $\mathbf{u}= (2,\dots,2)+ \mathbf{b_1}+\dots+\mathbf{b_{n-2}}$ with each $\mathbf{b_i}$ a one-cover of $V$. Thus, we need only show that $(2,\dots,2)$ can be written as a sum of two one-covers of $V$, but this is clear because $(2,\dots,2)=(1,\dots,1)+(1,\dots,1)$, which are trivially one-covers of $V$.

Now consider the subgraph induced by $\{y_{j_1}\}\cup N(y_{j_1})$. Since $P\in \bigcap_{r=1}^{n-2} D_{t,n}^r$, $P\in (V\cup Y_1)^\mathbf{c}$, for some $\hat{n}$-admissible $\mathbf{c}$. Thus, we need to show $\mathbf{c}\vert_{\{y_{j_1}\}\cup N(y_{j_1})}$ can be written as a sum of $n$ one-covers of the subgraph induced by $\{y_{j_1}\}\cup N(y_{j_1})$. By Definition \ref{defnhat}, $\mathbf{c}= \mathbf{d}+\mathbf{e}+\mathbf{f_1}+\cdots+\mathbf{f_{n-3}}$ with each $\mathbf{f_i}$ a one-cover of the subgraph induced by $\{y_{j_1}\}\cup N(y_{j_1})$. Further, $d_i=3$ for each vertex in $\{y_{j_1}\}\cup N(y_{j_1})$, so we need only show that $(3,\dots,3)$ can be written as a sum of $3$ one-covers of the subgraph induced by $\{y_{j_1}\}\cup N(y_{j_1})$, but this is clear by Lemma \ref{coveringsubgraphs}. 

Thus, for an arbitrary one-cluster, induced by $V\cup Y_1$, $\mathbf{a}\vert_{V\cup Y_1}$ can be written as the sum of $n$ one-covers of the subgraph induced by $V\cup Y_1$. This proves that $\mathbf{a}$ can be written as the sum of $n$ one-covers of $H_t$.

Hence, $L\subseteq (J_t)^n$.\end{proof}

Our main theorem now follows as a corollary to Theorem 
\ref{decomposition}:

\begin{proof}[Proof of Theorem \ref{maintheorem}]
For $t\in\mathbb{Z}_+$ consider the graph $H_t$ and label the vertices $V_{H_t}=\{v_1,\dots,v_m\}=X\cup Y$, where $X$ (respectively, $Y$) represents the set of $x$-vertices (respectively, $y$-vertices) as in Definition \ref{graphs} . By Theorem \ref{decomposition}, for $n>2$ it follows that a prime $P$ is in $\Ass(R_t/(J_t)^n)$ if and only if:
\begin{itemize}
\item[(a)] $P=(v_i,v_j)$ where $\{v_i,v_j\}\in E_{H_t}$,
\item[(b)] $P=(v_{i_1},\dots,v_{i_k})$, where the subgraph induced by $\{v_{i_1},\dots,v_{i_k}\}$ is an odd cycle,

or,

\item[(c)] For $1\leq r\leq n-2$, $P=(v_{i_1},\dots,v_{i_k},y_{j_1},\dots, y_{j_r})$, where the subgraph induced by $\{v_{i_1},\dots,v_{i_k},y_{j_1},\dots, y_{j_r}\}$ is an $r$-cluster.
\end{itemize}
By definition, $H_t$ has exactly $t$ $y$-vertices, so $H_t$ has a $t$-cluster, namely the graph $H_t$ itself. Thus, by (c) above, the ideal $(v_1,\dots,v_m)$ is an element of $\Ass(R_t/(J_t)^n)$ for $n\geq t+2$, but not for $n<t+2$. Thus, the associated primes do not stabilize until at least $t+2$. Further, note that the ideals of (a) and (b) are elements of $\Ass(R_t/(J_t)^n)$ for all $n\geq 2$, so the only prime ideals not in $\Ass(R_t/(J_t)^{2+t})$ that could appear in $\Ass(R_t/(J_t)^n)$ for $n>2+t$ would be ideals that fall into category (c). However, for $1\leq r\leq t$, ideals whose generators correspond to the vertices of $r$-clusters are elements of $\Ass(R_t/(J_t)^{2+t})$, so the only new ideals in $\Ass(R_t/(J_t)^n)$ for $n>2+t$ would be those whose generators correspond to the vertices of  $r$-clusters for $r>t$. The largest $r$ for which $H_t$ has an $r$-cluster is $r=t$; thus, no such new ideals exist in $\Ass(R_t/(J_t)^n)$ for $n>2+t$. Thus, the associated primes of $H_t$ stabilize at $2+t$.
\end{proof}

\begin{remark} We note that our family of graphs can be generalized slightly. The proof of Theorem \ref{decomposition} used the definition $\hat{n}$-admissible in two ways--- to show that the degree vectors can be written as $n$ one-covers, and to use the degree argument of Lemma \ref{degreearg}. Both facts remain unchanged if the induced $(4t-1)$-cycle is instead an odd cycle of any length greater than $4t-1$ (or in the case of $H_1$, if the $5$-cycle is any odd cycle of length greater than $5$). Further, for $i>1$, $y_i$ could have more than $4$ neighbors, as long as these neighbors are distinct from $N(y_j), j\neq i$, and the degree argument of Lemma \ref{degreearg} is still satisfied.   
\end{remark}
\section{Acknowledgements}
Many of the results of this paper started as computer experiments using the program Macaulay 2 \cite{M2}. The author would like to thank Amelia Taylor for introducing her to the subject and for her invaluable support and guidance throughout the research and writing processes, Thomas Shemanske for his help in the editing process, and Adam Van Tuyl for a helpful discussion of Lemma \ref{ncovers}. We also thank an anonymous referee for their insights and suggestions for improvement. 
\bibliographystyle{amsplain}
\bibliography{projectbib}

\providecommand{\bysame}{\leavevmode\hbox to3em{\hrulefill}\thinspace}
\providecommand{\MR}{\relax\ifhmode\unskip\space\fi MR }
\providecommand{\MRhref}[2]{%
  \href{http://www.ams.org/mathscinet-getitem?mr=#1}{#2}
}
\providecommand{\href}[2]{#2}
\begin{thebibliography}{1}

\bibitem{bollobas}
B{\'e}la Bollob{\'a}s, \emph{Modern graph theory}, Graduate Texts in
  Mathematics, vol. 184, Springer-Verlag, New York, 1998. \MR{1633290
  (99h:05001)}

\bibitem{Brodmann}
M.~Brodmann, \emph{Asymptotic stability of {${\rm Ass}(M/I^{n}M)$}}, Proc.
  Amer. Math. Soc. \textbf{74} (1979), no.~1, 16--18. \MR{521865 (80c:13012)}

\bibitem{oddholes}
Christopher~A. Francisco, Huy~T{\`a}i H{\`a}, and Adam Van~Tuyl,
  \emph{Associated primes of monomial ideals and odd holes in graphs}, J.
  Algebraic Combin. \textbf{32} (2010), no.~2, 287--301. \MR{2661419
  (2011e:05133)}

\bibitem{colorings}
\bysame, \emph{Colorings of hypergraphs, perfect graphs, and associated primes
  of powers of monomial ideals}, J. Algebra \textbf{331} (2011), 224--242.
  \MR{2774655 (2012b:13054)}

\bibitem{M2}
Daniel~R. Grayson and Michael~E. Stillman, \emph{Macaulay2, a software system
  for research in algebraic geometry}, Available at
  \href{http://www.math.uiuc.edu/Macaulay2/}%
  {http://www.math.uiuc.edu/Macaulay2/}.

\bibitem{hibi}
J{\"u}rgen Herzog, Takayuki Hibi, and Ng{\^o}~Vi{\^e}t Trung, \emph{Symbolic
  powers of monomial ideals and vertex cover algebras}, Adv. Math. \textbf{210}
  (2007), no.~1, 304--322. \MR{2298826 (2007m:13005)}

\bibitem{miller}
Ezra Miller and Bernd Sturmfels, \emph{Combinatorial commutative algebra},
  Graduate Texts in Mathematics, vol. 227, Springer-Verlag, New York, 2005.
  \MR{2110098 (2006d:13001)}

\bibitem{simis}
Aron Simis, Wolmer~V. Vasconcelos, and Rafael~H. Villarreal, \emph{On the ideal
  theory of graphs}, J. Algebra \textbf{167} (1994), no.~2, 389--416.
  \MR{1283294 (95e:13002)}

\bibitem{villarreal}
Rafael~H. Villarreal, \emph{Cohen-{M}acaulay graphs}, Manuscripta Math.
  \textbf{66} (1990), no.~3, 277--293. \MR{1031197 (91b:13031)}

\end{thebibliography}
\end{document}